\documentclass[12pt]{article}
\usepackage[utf8]{inputenc}

      \usepackage{latexsym}
         \usepackage[reqno, namelimits, sumlimits]{amsmath}
         \usepackage{amssymb, amsfonts}
         \usepackage{amsthm}
           
           

 \newtheorem{theorem}{Theorem}[section]

 \newtheorem{lemma}[theorem]{Lemma}

 \newtheorem{cor}[theorem]{Corollary}
 \newtheorem{pro}[theorem]{Proposition}

\author{ Gregory Seregin
}

\title{A note on potential Type II blowups of axisymmetric solutions to the Navier-Stokes equations }

\author{G.~Seregin\footnote{University of Oxford, Mathematical Institute, OxPDE, Oxford, UK and St Petersburg Department of Steklov Mathematical Institute, RAS, Russia, email address: \texttt{seregin@maths.ox.ac.uk}}
}

\begin{document}


\maketitle

\rightline{\it Dedicated to Nikolai  Nadirashvili}

\begin{abstract}  In the note, a certain scenario of potential Type II blowups of axisymmetric solutions to the Navier-Stokes equations is considered. The main tool of the treatment of such blowups is the corresponding Euler scaling.
\end{abstract}

{\bf Keywords} Navier-Stokes equations,
axial symmetry, regularity, blowups.  

{\bf Data availability statement}
Data sharing not applicable to this article as no datasets were generated or analysed during the current study.

{\bf Acknowledgement} 
 The work is supported by Leverhulme Emeritus Fellowship 2023.


\setcounter{equation}{0}
\section{Type II Blowups}

The typical object in the study of local regularity of solutions to the Navier-Stokes equations is the so-called suitable weak solutions. Let us start with a definition of them, following \cite{CKN}, see also \cite{Lin} and \cite{LS1999} for slightly different versions. Indeed, a pair of functions $v$ and $q$ is called a suitable weak solution  to the classical Navier-Stokes equations, describing the flow of a viscous incompressible fluid in a unique parabolic cylinder $Q$ if $v$ and $q$ have  the following properties:
\begin{itemize} 
\item $v\in L_\infty(-1,0;L_2(B)),\quad \nabla v\in L_2(Q),\quad q\in L_\frac 32(Q);$
	\item 
$\partial_tv+v\cdot\nabla v-\Delta v+\nabla q=0,\quad{\rm div}\,v=0$\\
in $Q$ in the sense of distributions;
\item
for a.a. $t\in ]-1,0[$, the local energy inequality 
$$\int\limits_B\varphi(x,t)|v(x,t)|^2dx+2\int\limits^t_{-1}\int\limits_B\varphi|\nabla v|^2dxd\tau\leq 
$$
$$\leq \int\limits^t_{-1}\int\limits_B (|v|^2(\partial_t\varphi+\Delta\varphi)+v\cdot\nabla\varphi(|v|^2+2q))dxd\tau$$ holds
for all smooth non-negative functions $\varphi$ vanishing in a vicinity of a parabolic boundary of the cylinder $Q$. 
\end{itemize}

Parabolic space-time cylinders and spatial balls are denoted in a standard way:
$Q(r)=B(r)\times ]-r^2,0[$ is a parabolic cylinder 
and $B(r)$ is a spatial ball of radius $r$ centred at the origin $x=0$, $B=B(1)$, and $Q=Q(1)$.

The goal is to find conditions for the existence of a positive number $r$ such that $v\in L_\infty(Q(r))$. If such a number exists, it is said the space-time origin $z=(x,t)=(0,0)=0$ is a regular point of $v$. The further regularity of $v$ in a vicinity of the origin $z=0$ can be obtained with the help of the linear theory. In particular, the velocity field $v$ is going to be H\"older continuous near the origin with the exponent depending on integrability in time of the pressure $q$. 


It is said  that the space-time point $z=0$ is a blowup point of $v$ or a singular point of $v$ if it is not a regular point of $v$. There are two types of blowups: Type I and Type II blowups, see \cite{Seregin2010} for the original definition.
 In particular, the origin  $z=0$ is a Type I blowup of $v$ if it  is a singular point of $v$ and
\begin{equation}
	\label{DefTypeI}
	g=\inf \{\limsup\limits_{r\to0}A(v,r),\limsup\limits_{r\to0}E(v,r),\limsup\limits_{r\to0}C(v,r)\}<\infty.
\end{equation}
In the above definition, the following scale energy quantities are exploited:
$$A(v,r)=\frac1r\sup\limits_{-r^2<t<0}\int\limits_{B(r)}|v(x,t)|^2dx,\quad
E(v,r)=\frac 1r\int\limits_{Q(r)}|\nabla v|^2dz,
$$
$$C(v,r)=\frac 1{r^2}\int\limits_{Q(r)}|\nabla v|^2dz.
$$
The fundamental  property of the those scale energy   quantities is their invariance   with respect to the natural Navier-Stokes scaling, in other words, if $v^\lambda(y,s)=\lambda v(x,t)$, and $q^\lambda(y,s)=\lambda^2 q(x,t)$ with $x=\lambda y$ and $t=\lambda^2s$, then, for example, $A(v^\lambda,r)=A(v,\lambda r)$, etc.

The celebrated Caffarelli-Kohn-Nirenberg theory reads: the origin $z=0$ is a regular point of $v$ if $g$ is sufficiently small, see details in \cite{Seregin2015}. It is, in fact, a particular case of the so-called $\varepsilon$-regularity theory developed for the Navier-Stokes equations. The principal difficulty is, of course, related to non-locality of the Navier-Stokes equations caused by incompressibility condition.  

The important question about Type I blowups is whether the boundedness of $g$ allows blowups or not. In general, it is still an open problem. However, it is known now that axisymmetric suitable weak solutions have no Type I blowups, see \cite{Seregin2020}. So, to complete the regularity theory of axisymmetric solutions, one needs to study potential Type II blowups. The first steps in this direction have been made in the paper \cite{Seregin2023}. Let us recall some definition and statements from there.

We shall say that the origin $z=0$ is a blowup of Type II if it is a singular point and  $g=\infty$. 

Here, in this papers, we shall investigate the following example of  Type II singularity: there exists a sequence  $r_k\to0$ as $k\to\infty$ such that 
\begin{equation}
	\label{growthcond}
M^{3,3}_{2,m_0}(v,r_k):=	\frac 1{r_k^{2m_0}}\int\limits_{Q(r_k)}|v|^3dz\geq c_1>0
\end{equation} for all natural numbers $k$ and some $0< m_0 <1$.

In addition, we restrict ourselves to consideration of the following scenario of the Type II blowup: 
 \begin{equation}
	\label{m-order}
\sup\limits_{0<R\leq 1}	(A_{m_1}(v,R)+D_m(q,R)+E_m(v,R))\leq c<\infty
\end{equation}
with  
$$m=\frac {3m_0}{2+m_0},\qquad m_1=2m-1, $$
 where
$$E_m(v,r)=\frac 1{r^{m}}\int\limits_{Q(r)}|\nabla v|^2dz, \quad A_{m_1}(v,r)=\sup\limits_{-r^2<t<0}\frac 1{r^{m_1}}\int\limits_{B(r)}|v(x,t)|^2dx,$$
$$D_m(q,r)=\frac 1{r^{2m}}\int\limits_{Q(r)}|q|^\frac 32dz.
$$

$$
$$

Obviously, singularities described by \eqref{growthcond} are particular ones. More general Type II blowups are studied in the paper \cite{Seregin2023}  mentioned above.

It is well known now that any suitable weak solution $v$ and $q$ has the following property: 
$\nabla^2v,
\partial_tv\in L_{l_1}(-r^2,0;L_{s_1}(B(r))$ for any $0<r<1$ and for any $s_1, l_1>1$ provided
\begin{equation}
	\label{for second deriv}
	\frac 3{s_1}+\frac 2{l_1}=4.
\end{equation}
Keeping mind that property, it is reasonable to assume additionally boundedness of another scaled quantity, i.e.:
 \begin{equation}
 	\label{secondderiveNsl} 
 	N^{s_1,l_1}(v,r):=\frac 1{r^{\gamma_*l_1}}\int\limits^0_{-r^2}\Big(\int\limits_{B(r)}(|\nabla^2v|^{s_1}+|\partial_tv|^{s_1})dx\Big)^\frac {l_1}{s_1}dt\leq c<\infty
 	\end{equation}
 for all $0<r<1$ and for some $s_1$ and $l_1$ satisfying restriction \eqref{for second deriv} with
 $$\gamma_*(s_1,m)=1-(3/(2s_1)-1)(1-m).
 $$

In the same way as it has been done in the paper \cite{Seregin2023}, one can prove the following statement.
\begin{pro} 
\label{ancientsolution}	Suppose that a pair $v$ and $q$ is a suitable weak solution to the Navier-Stokes equations in the unit space-time cylinder $Q$. Assume $v$ and $q$ satisfy the conditions \eqref{m-order}, \eqref{growthcond}, and \eqref{secondderiveNsl}. 


Then, there are two functions $u$ and $p$ defined in $Q_-=\mathbb R^3\times]-\infty,0[$, with the following properties:
	$$\sup\limits_{a>0}\Big[\sup\limits_{-a^2<s<0}\frac 1{a^{m_1}}\int\limits_{B(a)}|u(y,s)|^2dy+\frac 1{a^{2m}}\int\limits_{Q(a)}|p|^\frac 32dyds+$$
 \begin{equation}
	\label{basicestimates}
+\frac 1{a^{m}}\int\limits_{Q(a)}|\nabla u|^2dyds+\end{equation}
$$+\frac 1{a^{\gamma_
*l_1}}\int\limits^0_{-a^2}\Big(\int\limits_{B(a)}(|\nabla^2 u|^{s_1}+|\partial_s u|^{s_1})dy\Big)^\frac {l_1}{s_1}ds\Big]\leq c<\infty;$$
\begin{equation}
	\label{Euler}\partial_tu+u\cdot\nabla u+\nabla p=0, \quad{\rm div}\,u=0
\end{equation}in $Q_-=\mathbb R^3\times ]-\infty,0[$ in the sense of distributions;
 
 for a.a. $\tau_0\in ]-\infty,0[$, the local energy inequality
$$\int\limits_{\mathbb R^3}|u(y,\tau_0)|^2\varphi(y,\tau_0)dy\leq$$
\begin{equation}
	\label{enerylocal}
\leq \int\limits_{-\infty}^{\tau_0}\int\limits_{\mathbb R^3}(|u|^2(\partial_s\varphi+\Delta\varphi)+u\cdot\nabla\varphi(|u|^2+2
p))dyd
\tau
\end{equation}
holds for non-negative $\varphi\in C^\infty_0(\mathbb R^3\times \mathbb R)$;
 
 the function $u$ is not trivial in the following sense
 \begin{equation}
	\label{nontrivial}
	M^{3,3}_{2,m_0}(u,1)\geq c_1/2.	
\end{equation} 
\end{pro}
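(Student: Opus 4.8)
The plan is to construct $u,p$ as a limit of Euler rescalings of $v,q$ along the sequence $r_k\to0$ furnished by \eqref{growthcond}, following the scheme of \cite{Seregin2023}. First I would introduce the Euler scaling
\[
u^{(k)}(y,s)=r_k^{\,2-m}\,v(r_ky,r_k^{\,3-m}s),\qquad p^{(k)}(y,s)=r_k^{\,2(2-m)}\,q(r_ky,r_k^{\,3-m}s),
\]
so that $\mathrm{div}\,u^{(k)}=0$ and
\[
\partial_su^{(k)}+u^{(k)}\cdot\nabla u^{(k)}-\nu_k\Delta u^{(k)}+\nabla p^{(k)}=0,\qquad \nu_k=r_k^{\,1-m}\to0,
\]
in every $Q(R)$ once $k$ is large enough that $r_kR<1$ and $r_k^{3-m}R^2<1$. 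The amplitude exponent $a=2-m$ and the time exponent $b=a+1=3-m$ are forced: $b=a+1$ is the balance making the convective and time-derivative terms homogeneous, so that the limit solves Euler, while $a=2-m=\tfrac{3-m_1}2$ is the unique value rendering both $A_{m_1}$ and $E_m$ scale invariant.

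The second step is the uniform bound \eqref{basicestimates}. Since $b=3-m>2$, the time-slab of the rescaled cylinder sits inside the parabolic one, so after the change of variables each of $A_{m_1}(u^{(k)},R)$, $E_m(u^{(k)},R)$, $D_m(p^{(k)},R)$ is dominated by $A_{m_1}(v,r_kR)$, $E_m(v,r_kR)$, $D_m(q,r_kR)$, which are $\le c$ by \eqref{m-order}; the powers of $r_k$ cancel exactly because of the choices $m=\tfrac{3m_0}{2+m_0}$ and $m_1=2m-1$. The quantity $N^{s_1,l_1}$ needs more care, because under this scaling $\nabla^2u^{(k)}$ and $\partial_su^{(k)}$ pick up the different factors $r_k^{a+2}$ and $r_k^{2a+1}$; using $a>1$ one bounds both by $r_k^{a+2}$, and the exponent $\gamma_*(s_1,m)=1-(3/(2s_1)-1)(1-m)$ is precisely the value for which, under \eqref{for second deriv}, $N^{s_1,l_1}(u^{(k)},R)\le N^{s_1,l_1}(v,r_kR)\le c$ by \eqref{secondderiveNsl}.

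With \eqref{basicestimates} in hand, the $N^{s_1,l_1}$-bound controls $\nabla^2u^{(k)}$ and $\partial_su^{(k)}$ in $L^{l_1}_sL^{s_1}_y$ locally, so Aubin--Lions--Simon yields a subsequence with $u^{(k)}\to u$ and $\nabla u^{(k)}\to\nabla u$ strongly in $L^3_{\rm loc}(Q_-)$ and $p^{(k)}\to p$ in $L^{3/2}_{\rm loc}$. Passing to the limit kills the viscous term ($\nu_k\to0$ with $\nabla u^{(k)}$ bounded), giving \eqref{Euler}; the rescaled local energy inequality passes to \eqref{enerylocal} after discarding the nonnegative dissipation $2\nu_k\int\varphi|\nabla u^{(k)}|^2$ and invoking strong convergence on the right-hand side. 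The estimates \eqref{basicestimates} descend to $u,p$ by lower semicontinuity.

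The hard part will be the nontriviality \eqref{nontrivial}. Here $M^{3,3}_{2,m_0}$ is \emph{not} invariant under the Euler scaling, and the parabolic cylinder $Q(r_k)$ of \eqref{growthcond} and the Euler cylinder $B(r_k)\times(-r_k^{3-m},0)$ have different aspect ratios, so the lower bound cannot be transported directly. The computation that makes things work is $M^{3,3}_{2,m_0}(u^{(k)},1)=r_k^{-2m}\int_{B(r_k)\times(-r_k^{3-m},0)}|v|^3\,dz$, together with the identity $m_0(3-m)=2m$ equivalent to $m=\tfrac{3m_0}{2+m_0}$, which is exactly what aligns the scale factors once the two cylinders are matched. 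I would complete the step by showing that, for the singularity \eqref{growthcond}, the $L^3$-mass concentrates near the tip $s\to0^-$, so the shorter Euler slab $(-r_k^{3-m},0)$ still captures a fixed fraction of $\int_{Q(r_k)}|v|^3$; the strong $L^3_{\rm loc}$ convergence then gives $M^{3,3}_{2,m_0}(u,1)=\lim_kM^{3,3}_{2,m_0}(u^{(k)},1)\ge c_1/2$. Making this tip-concentration rigorous, and thereby transferring \eqref{growthcond} to \eqref{nontrivial}, is the delicate point of the whole argument.
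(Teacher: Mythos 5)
Your construction and uniform estimates follow the same route the paper indicates (the Euler scaling with $\alpha=2-m$, uniform bounds transferred from \eqref{m-order} and \eqref{secondderiveNsl}, compactness, and passage to the limit, as in \cite{Seregin2023}), and the verification of \eqref{basicestimates}, \eqref{Euler} and \eqref{enerylocal} is essentially correct. The genuine gap is in the last step, exactly where you flag it: the ``tip-concentration'' claim is not provable from the hypotheses, and it is your choice of scaling parameter that creates the need for it. Having fixed $\lambda_k=r_k$ (spatial matching), you must show that the slab $B(r_k)\times(-r_k^{3-m},0)$ captures at least the fraction $\sim r_k^{2(m-m_0)}$ of the $L_3$ mass of $|v|^3$ over $Q(r_k)$, while this slab occupies only the fraction $r_k^{1-m}$ of the time interval $(-r_k^2,0)$. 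Since $2(m-m_0)=2m(1-m)/(3-m)<1-m$ for $m<1$, even an exactly uniform-in-time distribution of the mass would be insufficient; worse, nothing in \eqref{m-order}, \eqref{growthcond}, \eqref{secondderiveNsl} prevents the mass from sitting entirely in, say, $B(r_k)\times(-r_k^2,-r_k^2/2)$, where it contributes nothing to your rescaled quantity. So this step fails as proposed.

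The fix is to match the time scales rather than the spatial ones: take
$$\lambda_k=r_k^{2/(\alpha+1)}=r_k^{2/(3-m)}>r_k,$$
so that $\lambda_k^{\alpha+1}=r_k^2$ and the Euler cylinder contains the parabolic one, $B(\lambda_k)\times(-\lambda_k^{\alpha+1},0)\supset Q(r_k)$. Then
$$M^{3,3}_{2,m_0}(u^{(k)},1)=\lambda_k^{2\alpha-4}\int\limits_{B(\lambda_k)\times(-r_k^2,0)}|v|^3\,dz\geq \lambda_k^{-2m}\int\limits_{Q(r_k)}|v|^3\,dz\geq c_1\,r_k^{2m_0-\frac{4m}{3-m}}=c_1,$$
because $2m=m_0(3-m)$, i.e. $4m/(3-m)=2m_0$; this is the place where $m=\frac{3m_0}{2+m_0}$ is really used. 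No concentration argument is needed, and the rest of your scheme is unaffected: the bounds \eqref{m-order} and \eqref{secondderiveNsl} hold at all scales $0<R\leq 1$, so they transfer to the rescaled functions for any choice of $\lambda_k\to0$, and strong $L_3$ local convergence turns the displayed bound into \eqref{nontrivial}. Two smaller corrections: the cancellation of powers in $A_{m_1}$, $E_m$, $D_m$ uses only $\alpha=2-m$ and $m_1=2m-1$, not the relation $m=\frac{3m_0}{2+m_0}$; and the strong $L_3$ convergence of $\nabla u^{(k)}$ you invoke is neither justified by your bounds nor needed, since weak convergence of $\nabla u^{(k)}$ together with strong convergence of $u^{(k)}$ suffices to kill the viscous term and to pass to the limit in the nonlinearity.
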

The proof of Proposition \ref{ancientsolution}
 is essentially based on the Euler scaling 
$$v(x,t)\longrightarrow\lambda^\alpha v(\lambda x, \lambda^{\alpha+1}t), \quad q(x,t)\longrightarrow\lambda^{2\alpha}q(\lambda x,\lambda^{\alpha+1}t), 
$$ where  $\lambda$ is a positive parameter tending to zero and  
$$\alpha=2-m =\frac{4-m_0}{2+m_0}.
$$

To avoid a certain trivial situation, see paper \cite{Seregin2023}, it is also assumed that
\begin{equation}
	\label{bounds}
	\frac 12\leq m<1\quad\Big (\Longleftrightarrow m_1\geq 0 \Longleftrightarrow\frac 25\leq m_0<1\Big).
\end{equation}

In the rest of the paper, we are going to exploit Proposition \ref{ancientsolution} in order to study the case of suitable weak solutions with axial symmetry.



\setcounter{equation}{0}
\section{Axial Symmetry}
Here and in what follows we are going to work in  cylindrical coordinates $r=|x'|$, with $x'=(x_1,x_2,0)$, $\vartheta$, and $x_3$. So, $v=v_re_r+v_\vartheta e_\theta+v_3e_3$, where $e_r$, $e_\vartheta$, and $e_3$ form the orthonormal  basis of the cylindrical coordinates.

Here, just for convenience, we replace  all spatial balls $B(a)$ with cylinders $\mathcal C(a):=\{|y'|<a,\, |y_3|<a\}$.

In the case of axial symmetry, as it has been shown in previous papers, there is one bounded quantity which is critical, i.e. it is invariant with respect to the natural Navier-Stokes scaling. Here, it is $rv_\vartheta$. Looking at the proof of the Proposition \ref{ancientsolution}, given in the paper \cite{Seregin2023}, where the Euler scaling 
$$v^{\lambda,\alpha)}(y,s)=\lambda^\alpha v(x,t),\quad x=\lambda y,\quad t=\lambda^{\alpha+1}s$$
has been used with parameters $\lambda\to0$ and $\alpha =2-m$, we can observe   the following:  
$$\sqrt{y_1^2+y^2_2}|v_\vartheta^{\lambda,\alpha}(\sqrt{y_1^2+y^2_2},y_3,s)|=\lambda^{\alpha-1}\sqrt{x_1^2+x^2_2}|v_\vartheta(\sqrt{x_1^2+x^2_2},x_3,t)|.$$
The latter means that our limiting Euler equations have no swirl, i.e., $u_\vartheta$   vanishes.

So, the limiting Euler equations must have the form
$$\partial_tu_r+u_ru_{r,r}+u_3u_{r,3}+p_{,r}=0,$$
\begin{equation}
	\label{velocity}
	\partial_tu_3+u_ru_{3,r}+u_3u_{3,3}+p_{,3}=0,
\end{equation}
$$\frac 1r(ru_r)_{,r}+u_{3,3}=0.$$
They also can be reduced to the following different form
$$\partial_tf+u_rf_{,r}+u_3f_{,3}=0,$$
\begin{equation}
	\label{vorticity}
	\Delta \psi-\frac 2r\psi_{,r}=r^2f,
\end{equation}
$$u_r=\frac 1r\psi_{,3},\qquad u_3=-\frac 1r\psi_{,r}.$$
In our case, the only one component of the vorticity 
$$\omega_\vartheta(u)=u_{r,3}-u_{3,r} =rf$$
is not necessary to be vanishing, i.e., 
$\omega(v)=\omega_\vartheta(v)e_\vartheta$.

Now, our aim is to consider different consequences of the first equation in the system \eqref{vorticity}.
\begin{lemma}
	\label{decaycond}
	Assume that
	\begin{equation}
		\label{decaycond1}
	|v(x,t)|\leq \frac c{|x'|^\alpha}	
	\end{equation}
	for all $z=(x,t)\in Q$ such that $
	|x'|>0$ and let 
	\begin{equation}
		\label{s_1} l_1\leq s_1
	\end{equation}
	 Then
	\begin{equation}
		\label{weakidentity}
		\int\limits_{Q_-}(F(f)\partial_t\varphi+F(f) u\cdot\nabla \varphi)dz=0 
	\end{equation} for all test functions $\varphi(|x'|,x_3,t)\in C^\infty_0(Q_-)$,
	where
	$$F(f)=\Phi(|f|), \quad \Phi(q)= \frac 2{l_1}q^\frac {l_1}2
	$$ and the function $\Phi$ defined for all $q\geq 0$.
\end{lemma}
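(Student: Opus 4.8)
The plan is to read \eqref{weakidentity} as the distributional form of the renormalized transport equation $\partial_t F(f)+u\cdot\nabla F(f)=0$. Formally this is immediate: the first equation of \eqref{vorticity} states that $f$ is transported by the divergence-free field $u$ (recall ${\rm div}\,u=0$), so by the chain rule $\Phi(|f|)$ solves the same homogeneous transport equation, and testing against $\varphi\in C^\infty_0(Q_-)$ while integrating by parts (the term $\int\varphi\,u\cdot\nabla F(f)$ becomes $-\int F(f)\,u\cdot\nabla\varphi$ precisely because ${\rm div}\,u=0$) yields \eqref{weakidentity}. The whole point is therefore to justify the chain rule in the low-regularity setting provided by Proposition \ref{ancientsolution}: the constraint \eqref{for second deriv} forces $s_1<3/2$, so $u$ fails to be Lipschitz and the renormalization cannot be carried out classically.

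First I would regularize. Mollifying $f$ in the axisymmetric space variables $(|y'|,y_3)$ (and, if needed, in $s$) produces smooth $f^\varepsilon$ obeying $\partial_t f^\varepsilon+u\cdot\nabla f^\varepsilon=R^\varepsilon$, where $R^\varepsilon:=u\cdot\nabla f^\varepsilon-(u\cdot\nabla f)^\varepsilon$ is the DiPerna--Lions commutator. Since $f^\varepsilon$ is smooth, the classical chain rule applies: $\partial_t F(f^\varepsilon)+u\cdot\nabla F(f^\varepsilon)=F'(f^\varepsilon)R^\varepsilon$. Multiplying by $\varphi$, integrating over $Q_-$, and integrating by parts with the help of ${\rm div}\,u=0$ gives the approximate identity $\int_{Q_-}(F(f^\varepsilon)\partial_t\varphi+F(f^\varepsilon)u\cdot\nabla\varphi)\,dz=-\int_{Q_-}\varphi\,F'(f^\varepsilon)R^\varepsilon\,dz$.

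Then I would pass to the limit $\varepsilon\to0$. The left-hand side converges to the left-hand side of \eqref{weakidentity} once $F(f)$ and $F(f)u$ are shown to be locally integrable, which follows from the bounds in \eqref{basicestimates} (controlling $\omega=\nabla\times u$, hence $f=\omega_\vartheta/r$, in the Sobolev scale dictated by \eqref{for second deriv}) together with the continuity of $\Phi$. For the right-hand side one combines the commutator estimate $R^\varepsilon\to0$ in the appropriate $L^p_{\rm loc}$ with a uniform bound on $F'(f^\varepsilon)=|f^\varepsilon|^{l_1/2-1}\,{\rm sgn}\,f^\varepsilon$; here the integrability of $\nabla u$ inherited from $\nabla^2u\in L_{l_1}(L_{s_1})$ is paired against that of $f$, and the hypothesis \eqref{s_1}, i.e. $l_1\le s_1$, is exactly what reconciles the time and space exponents so that $F'(f^\varepsilon)R^\varepsilon$ is uniformly integrable and the product tends to zero.

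I expect the main obstacle to be the behaviour near the symmetry axis $\{|y'|=0\}$, where $f=\omega_\vartheta/r$ is potentially singular and where the cylindrical measure degenerates. It is precisely here that the decay hypothesis \eqref{decaycond1}, transported to $u$ by the Euler scaling $v(x,t)\mapsto\lambda^\alpha v(\lambda y,\lambda^{\alpha+1}s)$ with $\alpha=2-m$, enters: it bounds $u$ near the axis and lets me verify that neither the limiting integrals nor the commutator term develop concentrations at $r=0$, so that the axis contributes no boundary term. Confirming this uniform integrability up to the axis --- equivalently, that the regularization can be localized away from $r=0$ and the cutoff then removed --- is the delicate point, while the remaining exponent bookkeeping is routine.
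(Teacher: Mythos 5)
Your proposal has genuine gaps at exactly the three places where the paper's hypotheses do real work. First, you treat the transport equation in \eqref{vorticity} as already available in distributional form for the limit pair $(u,f)$, so that you can mollify it and generate a DiPerna--Lions commutator. For the weak limit $u$ that equation is only formal, and roughly half of the paper's proof consists of deriving it: one works with the scaled Navier--Stokes solution $v^{\lambda,\alpha}$, which is smooth away from the axis, writes the exact identity satisfied by $\hat g^{\lambda,\alpha}=\omega_\vartheta(v^{\lambda,\alpha})/\rho$ (this still contains the viscous term and the extra drift $-2y'/|y'|^2$, both carrying factors $\lambda^{\alpha-1}\to0$), and passes to the limit using weak $L_{s_1,l_1}$ convergence of $\hat g^{\lambda,\alpha}$ against \emph{strong} $L_5$ convergence of $v^{\lambda,\alpha}$. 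It is precisely here, away from the axis, that \eqref{decaycond1} is used: it gives $|v^{\lambda,\alpha}(y,s)|\le c/|y'|^\alpha\le c/a^\alpha$ on $|y'|>a$, upgrading $L_3$ to $L_5$ convergence. Your reading of \eqref{decaycond1} as a bound on $u$ \emph{near} the axis that prevents concentration at $r=0$ has it backwards: the bound degenerates as $|x'|\to0$ and is useless there; the axis is controlled by entirely different means (see below). The outcome of this limit passage is the pointwise equation \eqref{deriveintime} with $\partial_s f,\ u\cdot\nabla f\in L_{s_1,l_1}$ locally away from the axis, after which no mollification or commutator is needed at all: the chain rule for a Lipschitz function composed with a Sobolev function applies directly.

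Second, your claimed ``uniform bound on $F'(f^\varepsilon)=|f^\varepsilon|^{l_1/2-1}\,\mathrm{sgn}\,f^\varepsilon$'' is false: since $l_1\le s_1<3/2$, the exponent $l_1/2-1$ is negative, so $F'$ blows up wherever $f^\varepsilon$ vanishes. This is exactly why the paper introduces the truncation $\Phi_\delta$ (linear below the level $\delta$, hence Lipschitz with bounded derivative), proves the identity for $F_\delta(f)$, and only at the very end lets $\delta\to0$; without this step your limit of $F'(f^\varepsilon)R^\varepsilon$ cannot be closed. Third, the axis step that you defer as ``the delicate point'' is the heart of the lemma and the sole reason for hypothesis \eqref{s_1}. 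The paper inserts a cutoff $\chi$ vanishing for $r<\varepsilon$, producing the error term $A_\varepsilon=-\int\psi F_\delta(f)u_r\chi'(r)\,dz$, and kills it by combining $|u_r/r|\le|\nabla u|$ (so a factor $r\sim\varepsilon$ cancels the $1/\varepsilon$ from $\chi'$), $|f|\le|\nabla^2u|$, and H\"older inequalities whose bookkeeping produces the volume factor $|\mathcal C(\varepsilon,2\varepsilon;2a)|^{\frac{s_1-l_1}{2s_1}}$; it is here, and in the H\"older splitting in time, that $l_1\le s_1$ is required (a negative exponent would blow up as $\varepsilon\to0$), not in reconciling exponents of a commutator estimate. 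Absolute continuity of the $\nabla u$ and $\nabla^2u$ integrals over the shrinking annulus then gives $A_\varepsilon\to0$. Since you neither derive the equation, nor handle the singular derivative of $\Phi$, nor execute the axis estimate, the proposal does not constitute a proof.
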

\begin{proof} Indeed, let 
$$\hat g(x,t)=g(r,x_3,t)= \omega_\theta(v)/r=(v_{r,3}-v_{3,r})/r
$$
and 
$$\hat g^{\lambda,\alpha}(y,s)=g^{\lambda,\alpha}(\rho,y_3,s)= \omega_\theta(v^{\lambda,\alpha})/r=(v^{\lambda,\alpha}_{\rho,3}-v^{\lambda,\alpha}_{3,\rho})/\rho,
$$ where
$$x=\lambda y,\quad r=\lambda \rho,\quad t=\lambda^{\lambda+1}s,$$
$$v^{\lambda,\alpha}(y,s)=\lambda^\alpha v(x,t).
$$
It is known very well that  solution  $v$ is smooth outside the axis of the symmetry $x'=0$ and thus the function $v^{\lambda,\alpha}$ is smooth there ($|y'|>0$) as well. So, denoting $\mathcal C(a,b;h)=\{a<|y'|<b, |y_3|<h\}$ and $Q(a,b,h;T)=\mathcal C(a,b;h)\times ]-T,0[$ and making the change of variables, we arrive at the following identity:
$$\int\limits_{Q(a,b,h;T)}\Big(\hat g^{\lambda,\alpha}\partial_s\varphi+
\hat g^{\lambda,\alpha}\Big(v^{\lambda,\alpha}-\lambda^{\alpha-1}2\frac {y'}{|y'|^2}\Big)\cdot\nabla \varphi
+\lambda^{\alpha-1}1\hat g^{\lambda,\alpha}\Delta\varphi\Big)de=$$
$$
=\lambda^{2\alpha+3}\int\limits_{Q(\lambda a,\lambda b,\lambda h;\lambda^{\alpha+1}T)}\Big(\hat g\partial_s\varphi^{\lambda,\alpha}+
\hat g\Big(v-2\frac {x'}{|x'|^2}\Big)\cdot\nabla \varphi^{\lambda,\alpha}+\hat g\Delta\varphi^{\lambda,\alpha}\Big)dz=0$$
where $\varphi\in C^\infty_0(Q(a,b,h;T))$
and $\varphi^{\lambda,\alpha}(x,t)=\varphi(y,t)$. Following the paper \cite {Seregin2023}, we pass to  the limit in the latter identity as 
$\lambda$ tends to zero. Since $g^{\lambda,\alpha}$  is a component of the tensor $\nabla^2 v^{\lambda, \alpha}$ and  according to  assumption \eqref{secondderiveNsl}, 
$g^{\lambda,\alpha}$  converges to $f$ weakly
in $L_{s_1,l_1}(Q(a,b,h;T))$. Next, we again refer to  paper \cite {Seregin2023}, where  it has been shown that $v^{\lambda,\alpha}\to u$ in $L_3(Q(a,b,h;T))$. In addition, making use of assumption \eqref{decaycond1}, one can prove that $v^{\lambda,\alpha}$ is bounded in $Q(a,b,h;T)$. Indeed,
$$
|v^{\lambda,\alpha}(y,s)|=\lambda^\alpha |v(x,t)|\leq \lambda^\alpha \frac c{|x'|^\alpha}=\frac c{|y'|^\alpha}\leq \frac c{a^\alpha}.$$
Hence, by the last two properties,   $v^{\lambda,\alpha}\to u$ in $L_5(Q(a,b,h;T)$. So, after taking the limit, we arrive at the identity
$$\int\limits_{Q(a,b,h;T)}(f\partial_s\varphi+fu\cdot\nabla \varphi)de=0
$$
being valid for all $\varphi \in C^\infty_0(Q(a,b,h;T))$ and for all $0<a<b$ and for all $T>0$. The latter implies the inclusion
$$\partial_sf\in L_{s_1,l_1}(Q(a,b,h;T))$$
and the equation
\begin{equation}
	\label{deriveintime}\partial_sf=-u\cdot\nabla f
\end{equation}
which is true a.e. in $Q(a,b,h;T)$. 

For a fixed positive number $\delta$, let us introduce the function $\Phi_\delta(q)=2/l_1q^\frac {l_1}2$ if $q>\delta$ and $\Phi_\delta(q)=2/l_1q\delta^{\frac {l_1}2-1}$ if $0<q<\delta$. Then set $F_\delta(f)=\Phi_\delta(|f|)$. It is easy to verify that the function $F_\delta $ is Lipchitz with bounded derivative and the following estimate is valid:
$$F_\delta(f)\leq \frac 2{l_1}|f|^{\frac{l_1}2}.$$ Making use of identity \eqref{deriveintime} and the fact that $0<a<r=|y'|<b$, derive  the  following:
$$\partial_sF_\delta(f)=F'_\delta(f)\partial_sf=-F'_\delta(f)u\cdot \nabla f=-u\cdot \nabla F_\delta(f)\in L_{s_1,l_1}(Q(a,b,h;T))
$$
a.e. in $Q(a,b,h;T)$. Taking into account that $0<a<b$, $h$, and $T$ are arbitrary positive numbers, we may pick up a cut-off function $\varphi(x,t)=\varphi_a(|x'|,x_3)\chi(|x'|)\eta(t)$ so that: $\varphi_a(x)=1$ in $\mathcal C(a):=\{x=(x',x_3): \,|x'|<a,\,|x_3|<a\}$ and $\varphi_a(x)=0$ out of  $\mathcal C(2a)$; $\chi(r)=0$ if $0<r<\varepsilon$ and  $\chi(r)=1$ if $2\varepsilon <r$; $\eta\in C^
\infty_0(-T,0)$ with $T<a^2$. In addition it is supposed that  derivatives of the functions $\varphi$ and $\chi$ satisfy the following requirements: $|\nabla \varphi|<c/a$ and $|\chi'|<c/\varepsilon$.

Multiplying the latter identity by the cut-off function $\varphi$ described above and integrating by parts, we find 
\begin{equation}
	\label{eps-identity}\int\limits_{Q_-}(F_\delta(f)\chi\partial_s\psi+F_\delta(f)\chi u\cdot \nabla \psi)dz=-\int\limits_{Q_-}\psi F_\delta(f)u_r\chi'(r)dz=A_\varepsilon
\end{equation}
where $\psi=\varphi_a\eta$. In order to evaluate $A_\varepsilon$, let us change Cartesian coordinates to cylindrical ones. Then, after applying H\"older inequality several times, we have the following chain of estimates:*
$$|A_\varepsilon|\leq \frac c\varepsilon\int\limits_{-\infty}^0\int\limits^{2a}_{-2a}\int\limits^{2\varepsilon}_\varepsilon\psi|f|^\frac {l_1}2|u_r|rdrdx_3dt\leq
$$
$$\leq \frac c\varepsilon\int\limits^0_{-\infty}\eta(t)\Big(\int\limits^{2a}_{-2a}\int\limits^{2\varepsilon}_{\varepsilon}\varphi_a|f|^{s_1}rdrdx_3\Big)^\frac {l_1}{2s_1}\Big(\int\limits^{2a}_{2a}\int\limits^{2\varepsilon}_{\varepsilon}\varphi_a|u_r|^\frac {2s_1}{2s_1-l_1}rdrdx_3\Big)^\frac {2s_1-l_1}{2s_1}dt\leq 
$$
$$\leq \frac c\varepsilon\Big(\int\limits^0_{-\infty}\eta\Big(\int\limits^{2a}_{-2a}\int\limits^{2\varepsilon}_{\varepsilon}\varphi_a|f|^{s_1}rdrdx_3\Big)^\frac {l_1}{s_1}dt\Big)^\frac 12\times$$$$\times\Big(\int\limits^0_{-\infty}\eta\Big(\int\limits^{2a}_{2a}\int\limits^{2\varepsilon}_{\varepsilon}\varphi_a|u_r|^\frac {2s_1}{2s_1-l_1}rdrdx_3\Big)^\frac {2s_1-l_1}{s_1}dt\Big)^\frac 12\leq $$
$$\leq \frac c\varepsilon\Big(\int\limits_{-T}^0\Big(\int\limits_{\mathcal C(\varepsilon,2\varepsilon;2a)}|f|^{s_1}dx\Big)^\frac {l_1}{s_1}dt\Big)^\frac 12\Big(\int\limits_{-T}^0\int\limits_{\mathcal C(\varepsilon,2\varepsilon;2a)}r^2\Big(\frac {|u_r|}r\Big)^2dxdt\Big)^\frac 12\times $$$$\times
|\mathcal C(\varepsilon,2\varepsilon;2a)|
^\frac {s_1-l_1}{2s_1}\leq$$$$\leq c\Big(\int\limits_{-T}^0\Big(\int\limits_{\mathcal C(\varepsilon,2\varepsilon;2a)}|f|^{s_1}dx\Big)^\frac {l_1}{s_1}dt\Big)^\frac 12\Big(\int\limits_{-T}^0\int\limits_{\mathcal C(\varepsilon,2\varepsilon;2a)}\Big(\frac {|u_r|}r\Big)^2dxdt\Big)^\frac 12\times$$$$\times|\mathcal C(\varepsilon,2\varepsilon;2a)|
^\frac {s_1-l_1}{2s_1}.$$
Since $|f|\leq |\nabla^2 u|$ and $|u_r/r|\leq 
|\nabla u|$, we can state that $A_\varepsilon\to0$ as $\varepsilon\to0$ for fixed $\delta$ and $a$. So, passing to the limit in \eqref{eps-identity} and taking into account summability of the integrand there give the variational identity:
\begin{equation}
	\label{eps-identity2}\int\limits_{Q_-}(F_\delta(f)\partial_s\psi+F_\delta(f) u\cdot \nabla \psi)dz=0
\end{equation}
where $\psi=\varphi_a\eta$. For similar reasons, we also can take a limit in \eqref{eps-identity2} as $\delta\to 0$ for fixed $a$. Moreover, it is easy exercise to finish the proof of the lemma.
\end{proof}



Now, let us introduce some abbreviations: 
$$g(t)=\int\limits_{\mathbb R^3}\Phi(|f(x,t)|)dx,\quad g_a(t)=
\int\limits_{\mathbb R^3}\Phi(|f(x,t)|)\phi_a(x)dx.$$
According to Lemma \ref{decaycond},
we have 
$$g_a'(t)=\partial_t\int\limits_{\mathbb R^3}\Phi(|f|)\phi_adx=\int\limits_{\mathbb R^3}\Phi(|f|)u\cdot\nabla\phi_a dx\in L_1(-T_0,0)
$$  for any $T_0>0$.  Fix $T_0>0$ and try to understand under which assumptions on $m$ 
the following is true:
$g'_a\to 0$ in $L_1(-T_0,0)$ as $a\to\infty$. 
In order to evaluate the right hand-side of the latter identity, conditions \eqref{secondderiveNsl} and \eqref{s_1} are going to be used. 
Indeed, repeating arguments from the proof of Lemma \ref{decaycond}, we find
\begin{equation}
	\label{CauchyIneq}
\int\limits_{-T_0}^0|g_a'(t)|dt \leq \frac ca\int\limits^0_{-T_0}\Big(\int\limits_{\mathcal C(2a)}|f|^{s_1}dx\Big)^\frac {l_1}{2s_1}\Big(\int\limits_{\mathcal C(2a)}|u|^\frac {2s_1}{2s_1-l_1}dx\Big)^\frac {2s_1-l_1}{2s_1}dt\leq \end{equation}
$$\leq \frac ca\int\limits^0_{-T_0}\Big(\int\limits_{\mathcal C(2a)}|f|^{s_1}dx\Big)^\frac {l_1}{2s_1}\Big(\int\limits_{\mathcal C(2a)}|u|^2dx\Big)^\frac 12)|\mathcal C(2a)|^\frac {s_1-l_1}{2s_1}\leq $$
$$
\leq \frac caT_0^\frac 12\Big(\int\limits^0_{-T_0}\Big(\int\limits_{\mathcal C(2a)}|f|^{s_1}dx\Big)^\frac {l_1}{s_1}dt\Big)^\frac 12(2a)^\frac {m_1}2A^\frac 12_{m_1}(u,2a)a^{3\frac {s_1-l_1}{2s_1}}$$
Since $|\omega_\vartheta(u)|/r\leq |\nabla^2u|$, the following estimate can be derived from \eqref{CauchyIneq} provided $4a^2>T_0$:
$$\int\limits_{-T_0}^0|g_a'(t)|dt\leq
\frac ca T_0^\frac {1}{2}(2a)^{\frac {m_1}2}A^\frac 12_{m_1}(u,2a)(2a)^{\frac {\gamma_*l_1}2}(N^{s_1,l_1}(u,2a))^\frac {1}2a^{3\frac {s_1-l_1}{2s_1}}\leq
$$
$$\leq  ca^{\frac {m_1}2+\frac {\gamma_*l_1}2+3\frac {s_1-l_1}{2s_1}-1}=ca^{\frac {2m-1}2+\frac {ml_1+6-m-4l_1}2-1}=ca^{\frac {m(1+l_1)+3-4l_1}2}\to0$$
as $a\to \infty$ if it is assumed that 
\begin{equation}
	\label{axirestriction on m}
	m<\frac {4l_1-3}{l_1+1}
	\quad  \Big (\Longleftrightarrow \quad m_0<\frac {2(4l_1-3)}{6-l_1}
	\Big).
\end{equation}

Now, one  can prove easily the following proposition:
\begin{pro}
	\label{minorpropo}
	Assume that in addition to all conditions of Lemma \ref{decaycond}  restriction  \eqref{axirestriction on m} holds and there exists $t_0\leq 0$ such that 
	\begin{equation}
		\label{assumption}
		g(t_0):=\frac 2{l_1}\int\limits_{\mathbb R^3}\Big(\frac {|\omega_\vartheta(u(x,t_0)|}{r}\Big)^\frac {l_1}2dx<\infty.
	\end{equation}
Then 
\begin{equation}
	\label{preservation}
	g(t):=\frac 2{l_1}\int\limits_{\mathbb R^3}\Big(\frac {|\omega_\vartheta(u(x,t)|}{r}\Big)^\frac {l_1}2dx=g(t_0)
\end{equation}	
for all $t\leq 0$.
\end{pro}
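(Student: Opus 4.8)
The plan is to deduce the time-independence of $g$ from the fact, already established through \eqref{CauchyIneq} and the restriction \eqref{axirestriction on m}, that the truncated quantities $g_a$ have vanishing total variation on any fixed time interval as $a\to\infty$. Fix $T_0>0$ and choose the spatial cut-offs $\phi_a$ with $0\le\phi_a\le 1$, $\phi_a\equiv 1$ on $\mathcal C(a)$ and $\phi_a\equiv 0$ outside $\mathcal C(2a)$, so that $\phi_a\to 1$ pointwise. Testing the variational identity \eqref{weakidentity} of Lemma \ref{decaycond} with the separated test function $\varphi(x,t)=\phi_a(x)\eta(t)$, $\eta\in C^\infty_0(-T_0,0)$, shows that $g_a$ has the weak derivative $g_a'(t)=\int_{\mathbb R^3}\Phi(|f|)\,u\cdot\nabla\phi_a\,dx\in L_1(-T_0,0)$; hence $g_a$ is absolutely continuous on $[-T_0,0]$ and the fundamental theorem of calculus applies to it.

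First I would record the basic comparison. For any $t,t_0\in[-T_0,0]$,
$$|g_a(t)-g_a(t_0)|\le\int\limits_{-T_0}^0|g_a'(\tau)|\,d\tau=:\varepsilon_a(T_0),$$
and the chain of estimates culminating in \eqref{axirestriction on m} gives $\varepsilon_a(T_0)\to 0$ as $a\to\infty$ for each fixed $T_0$.

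Next I would pass to the limit $a\to\infty$, the only delicate point being that the finiteness of $g(t)$ is a priori known only at $t=t_0$. Since $g(t_0)<\infty$ by \eqref{assumption}, the integrand $\Phi(|f(\cdot,t_0)|)$ is summable and, because $0\le\Phi(|f|)\phi_a\le\Phi(|f(\cdot,t_0)|)$ with $\phi_a\to 1$, dominated convergence yields $g_a(t_0)\to g(t_0)$. For an arbitrary $t\le 0$, Fatou's lemma gives $g(t)\le\liminf_{a\to\infty}g_a(t)$, while the comparison above gives $g_a(t)\le g_a(t_0)+\varepsilon_a(T_0)$; letting $a\to\infty$ we obtain $g(t)\le g(t_0)<\infty$. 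In particular $\Phi(|f(\cdot,t)|)$ is summable for every such $t$, so dominated convergence now applies at $t$ as well and $g_a(t)\to g(t)$. Passing to the limit in the comparison inequality then gives $|g(t)-g(t_0)|\le 0$, i.e. $g(t)=g(t_0)$. Since $T_0>0$ and $t\le 0$ were arbitrary, this is precisely \eqref{preservation}.

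The main obstacle is therefore not the limit itself but the bookkeeping around the possibly infinite values of $g$: one cannot invoke dominated convergence uniformly in $t$ from the outset, and the one-sided Fatou bound is exactly what upgrades the single finiteness hypothesis \eqref{assumption} at $t_0$ to finiteness --- and then equality --- at every time. Everything else reduces to the already-proven decay $\varepsilon_a(T_0)\to 0$, which is where the scaling restriction \eqref{axirestriction on m} (equivalently the admissible range of $m_0$) enters decisively.
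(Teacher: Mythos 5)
Your proof is correct and follows essentially the same route as the paper's: both rely on the pre-established facts that $g_a'(t)=\int_{\mathbb R^3}\Phi(|f|)\,u\cdot\nabla\phi_a\,dx$ and that $\int_{-T_0}^0|g_a'|\,d\tau\to0$ as $a\to\infty$ under \eqref{axirestriction on m}, and then pass to the limit in $|g_a(t)-g_a(t_0)|\le\int_{-T_0}^0|g_a'|\,d\tau$. The only difference is bookkeeping: the paper invokes the Beppo Levi (monotone convergence) theorem to get $g_a(t)\to g(t)$ even when finiteness at $t$ is not yet known, whereas you reach the same point via Fatou's lemma (to first upgrade \eqref{assumption} to finiteness of $g(t)$) followed by dominated convergence --- a slightly more explicit but equivalent treatment.
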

\begin{proof} Obviously, $g_n(t_0)\to g(t_0)$ as $n\to \infty$. On the other hand,
$$|g_n(t)-g_n(t_0)|\leq \int\limits_{-T_0}^0|g_n'(\tau)|d\tau\to 0$$
as $n\to\infty$. Then by the Beppo Levi theorem, $g_n(t)\to g(t)$ as $n\to\infty$ and moreover \eqref{preservation} holds.
	\end{proof}

It is not very difficult to deduce from  Proposition \ref{minorpropo} the following statement.
\begin{pro}
	\label{back1} Assume that condition \eqref{axirestriction on m} holds and let in addition
	\begin{equation}
		\label{crtical}
		{\rm ess}\sup\limits_{-1<t<0}\int\limits_{\mathcal C}|v(x,t)|^qdx=:C_0<\infty
	\end{equation} with $q=3/(2-m)\in [2,3[$, see \eqref{bounds}. Then
	 the origin $z=0$ might be a Type II singular point of $v$, described in \eqref{growthcond},  only if at least one of the conditions \eqref{m-order}, \eqref{decaycond1}, and \eqref{s_1} is violated.
	 \end{pro}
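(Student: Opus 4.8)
The plan is to combine the conservation law from Proposition \ref{minorpropo} with the extra integrability hypothesis \eqref{crtical} to force the limiting Euler solution $u$ to be trivial, contradicting the nontriviality \eqref{nontrivial} guaranteed by Proposition \ref{ancientsolution}. The logical structure is a proof by contradiction: assume $z=0$ is a Type II singularity of the form \eqref{growthcond} and that all of \eqref{m-order}, \eqref{decaycond1}, and \eqref{s_1} hold; then produce the ancient Euler solution $u$, show it must vanish, and derive a contradiction with \eqref{nontrivial}.

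First I would invoke Proposition \ref{ancientsolution}, whose hypotheses are available once \eqref{m-order}, \eqref{growthcond}, and \eqref{secondderiveNsl} are in force, to obtain the nontrivial ancient solution $u$ on $Q_-$ satisfying the Euler system \eqref{Euler} and the bounds \eqref{basicestimates}. Because of axial symmetry and the Euler scaling computation in Section~2, $u$ has no swirl and solves \eqref{vorticity}, so the vorticity reduces to the single component $\omega_\vartheta(u)=rf$. The assumptions of Lemma \ref{decaycond} are met (decay \eqref{decaycond1} and the exponent restriction \eqref{s_1}), and \eqref{axirestriction on m} is assumed, so Proposition \ref{minorpropo} applies: provided $g(t_0)<\infty$ for some $t_0$, the quantity $g(t)=\tfrac{2}{l_1}\int_{\mathbb R^3}(|\omega_\vartheta(u)|/r)^{l_1/2}dx$ is constant in $t$.

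The heart of the argument is to use \eqref{crtical} to show that this conserved quantity is forced to be zero. The scaling hypothesis $q=3/(2-m)=3/\alpha$ is precisely the exponent that makes $\mathrm{ess\,sup}_t\int_{\mathcal C}|v|^q\,dx$ invariant under the Euler scaling $v^{\lambda,\alpha}(y,s)=\lambda^\alpha v(\lambda y,\lambda^{\alpha+1}s)$: a change of variables gives $\int_{\mathcal C(a)}|v^{\lambda,\alpha}|^q\,dy=\lambda^{\alpha q-3}\int_{\mathcal C(\lambda a)}|v|^q\,dx$, and $\alpha q-3=0$. Thus $\mathrm{ess\,sup}_t\int_{\mathcal C(a)}|u|^q\,dy\le C_0$ uniformly in $a$ after passing to the limit $\lambda\to0$, which means $u(\cdot,t)\in L_q(\mathbb R^3)$ with a bound independent of the scale. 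I would then show that a nonzero constant value of $g(t)$ is incompatible with this global $L_q$ control on $u$ together with the growth estimates \eqref{basicestimates}: the conserved enstrophy-type integral would have to be supported at every scale, whereas the uniform $L_q$ bound forces concentration to dissipate, so the only consistent value is $g\equiv0$, i.e. $\omega_\vartheta(u)\equiv0$. With vanishing vorticity and no swirl, $u$ is a gradient flow with $\Delta\psi-\tfrac{2}{r}\psi_{,r}=0$, and the bounds \eqref{basicestimates} then force $u\equiv0$, contradicting \eqref{nontrivial}.

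The main obstacle I expect is the passage from ``$g(t)$ is a finite nonzero constant'' to ``$u\equiv0$''. This requires verifying that $g(t_0)$ is genuinely finite for some $t_0$ (so that Proposition \ref{minorpropo} triggers) and then extracting a quantitative contradiction: one must reconcile the scale-invariant $L_q$ bound coming from \eqref{crtical} with the conserved quantity at arbitrarily small and large scales. The delicate point is that $l_1/2<1$ in the relevant range, so $g$ is a sub-critical power of the vorticity and its conservation does not immediately yield an $L_2$-enstrophy bound; I would handle this by interpolating between the conserved $\int(|\omega_\vartheta|/r)^{l_1/2}$ and the scale-invariant $L_q$-norm of $u$, using the Biot--Savart representation $u=\tfrac1r(\psi_{,3},-\psi_{,r})$ from \eqref{vorticity} to tie the size of $u$ to that of $f$, and showing the combined invariance under the family of scalings leaves no room for a nonzero profile. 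This is where the specific arithmetic relating $m$, $m_0$, $l_1$, and the exponent $q=3/(2-m)$ must line up, and it is the step most likely to require the full force of the hypotheses.
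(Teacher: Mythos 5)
Your skeleton matches the paper's: argue by contradiction, obtain the ancient Euler solution $u$ from Proposition \ref{ancientsolution}, invoke the conservation law of Proposition \ref{minorpropo}, conclude $\omega_\vartheta(u)\equiv0$, and get a contradiction between irrotationality and nontriviality \eqref{nontrivial}. But the central step --- showing the conserved quantity is actually zero --- is where your proposal has a genuine gap. You propose to derive $g\equiv0$ from an ``incompatibility'' between the scale-invariant $L_q$ bound on $u$ and a nonzero conserved integral, via interpolation and Biot--Savart. That mechanism cannot work as stated: the whole point of the blow-up construction is that $u$ already satisfies a full family of scale-invariant bounds \eqref{basicestimates} while being nontrivial, so adding one more uniform-in-scale bound (here $\sup_a\,{\rm ess}\sup_t\int_{\mathcal C(a)}|u|^q\,dy\le C_0$) creates no contradiction by itself. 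You also never supply the finiteness $g(t_0)<\infty$ needed to trigger Proposition \ref{minorpropo}, and you acknowledge this.

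The paper's mechanism is different and more concrete: it works at the final time slice $t=0$. First one extends the essential supremum in \eqref{crtical} to a genuine bound at every $t\in\,]-1,0]$, including $t=0$ (fact \eqref{everywhere}), and establishes weak continuity in time of the rescaled solutions up to $s=0$ (fact \eqref{weakconti}). Then, because $\alpha q=3$ makes the $L_q$ norm exactly invariant under the Euler scaling, one has
$$\int\limits_{\mathcal C(a)}|v^{\lambda_k,\alpha}(y,0)|^q\,dy=\int\limits_{\mathcal C(\lambda_k a)}|v(x,0)|^q\,dx\to0\quad\hbox{as }k\to\infty,$$
not because of any uniform bound, but by absolute continuity of the Lebesgue integral of the single fixed function $|v(\cdot,0)|^q$ over the shrinking cylinders $\mathcal C(\lambda_k a)$. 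Combined with \eqref{weakconti} this yields $u(\cdot,0)=0$ in $\mathbb R^3$, hence $\omega(\cdot,0)=0$; in particular $g(0)=0<\infty$, which both verifies the hypothesis of Proposition \ref{minorpropo} (with $t_0=0$) and fixes the conserved value at zero, so that $\omega_\vartheta(u)\equiv0$ for all $t\le0$ and the contradiction with nontriviality follows. In short: the decisive input from \eqref{crtical} is vanishing over shrinking domains at $t=0$, propagated backward in time by the conservation law --- not a norm-interpolation argument, which is the step your proposal leaves unproved and which I do not see how to repair along the lines you sketch.
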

\begin{proof} Using standard arguments for proving partial regularity of suitable weak solutions, see \cite{Seregin2023}, one can establish  the following two facts:
\begin{equation}
\label{everywhere}
\int\limits_{\mathcal C}|v(x,t)|^qdx\leq C_0	\end{equation}	for all $t\in ]-1,0]$;
\begin{equation}
	\label{weakconti}
	\int\limits_
	{\mathcal C(a)}v^{\lambda_k,\alpha}(y,s)\cdot\varphi(y)dy\to \int\limits_
	{\mathcal C(a)}u(y,s)\cdot\varphi(y)dy\end{equation}
in $C([-a^2,0])$ for any $a>0$ and $\varphi\in C^\infty_0(\mathcal C(a))$. Here,
$$v^{\lambda_k,\alpha}(y,s)=\lambda_k^\alpha u(x,t),\quad  x=\lambda_ky,\quad t=\lambda_k^{\alpha+1}s$$
with $\lambda_k\to0$ as $k\to\infty$.
Then, for each fixed test-function $\varphi$, the following estimate can be derived:
$$\Big|\int\limits_
	{\mathcal C(a)}v^{\lambda_k,\alpha}(y,0)\cdot\varphi(y)dy\Big|\leq c(a,q,\varphi)\Big(\int\limits_
	{\mathcal C(a)}|v^{\lambda_k,\alpha}(y,0)|^qdy\Big)^\frac 1q=$$
$$=c(a,q,\varphi)\Big(\int\limits_
	{\mathcal C(\lambda_ka)}|v(x,0)|^qdx\Big)^\frac 1q \to 0$$
as $k\to \infty$. Now, \eqref{everywhere} and \eqref{weakconti} imply
$$\int\limits_
	{\mathcal C(a)}u(y,0)\cdot\varphi(y)dy	=0$$
	for all $\varphi\in C^\infty_0(\mathcal C(a))$ and for all $a>0$. The latter means that $u(\cdot,0)=0$ in $\mathbb R^3$. Hence, $\omega(\cdot,0)=0$ and, by Proposition \ref{minorpropo}, $u$ is an irrotational velocity field, whcih is impossible if $u$ is not trivial, see details in Section 3 of the paper \cite{Seregin2023}.
	\end{proof}
\begin{cor}
	\label{back2} Assume that condition \eqref{axirestriction on m} holds and let in addition
	\begin{equation}
		\label{crtical2}
		{\rm ess}\sup\limits_{-1<t<0}\int\limits_{\mathcal C}|\nabla \times v(x,t)|^{q_1}dx=:C_0<\infty
	\end{equation} with $q_1=3/(3-m)\in [6/5,3/2[$, see \eqref{bounds}. Then
	 the origin $z=0$ might be a Type II singular point of $v$, described in \eqref{growthcond},  only if if at least one of the conditions \eqref{m-order}, \eqref{decaycond1}, and \eqref{s_1} is violated.
	\end{cor}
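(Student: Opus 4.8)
The plan is to rerun the argument of Proposition \ref{back1} with the vorticity $\omega:=\nabla\times v$ in place of the velocity, the point being that $q_1=3/(3-m)$ is \emph{exactly} the exponent for which the spatial integral $\int|\omega|^{q_1}\,dx$ is invariant under the Euler scaling. Concretely, it suffices to show that the limiting field $u$ from Proposition \ref{ancientsolution} satisfies $\omega(u)(\cdot,0)=0$ in $\mathbb R^3$; everything after that is the endgame of Proposition \ref{back1}. As a preliminary I would record the analog of \eqref{everywhere} for the vorticity: by the same weak-continuity-in-time arguments used for suitable weak solutions (here applied to $t\mapsto\omega(\cdot,t)$, which lies in $L_{q_1}(\mathcal C)$ by virtue of \eqref{secondderiveNsl} and \eqref{for second deriv}) together with weak lower semicontinuity of the norm, the essential-supremum bound \eqref{crtical2} upgrades to every $t\in\,]-1,0]$, in particular $\int_{\mathcal C}|\omega(x,0)|^{q_1}\,dx\le C_0<\infty$.

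Next I would exploit the critical scaling. Under $v^{\lambda,\alpha}(y,s)=\lambda^\alpha v(\lambda y,\lambda^{\alpha+1}s)$ the vorticity transforms as $\omega^{\lambda,\alpha}(y,s):=\nabla_y\times v^{\lambda,\alpha}(y,s)=\lambda^{\alpha+1}\,\omega(\lambda y,\lambda^{\alpha+1}s)$, and since $\alpha+1=3-m$ we have $(\alpha+1)q_1=3$. Hence, changing variables $x=\lambda_k y$, the Jacobian exactly cancels the prefactor and
$$\int_{\mathcal C(a)}|\omega^{\lambda_k,\alpha}(y,0)|^{q_1}\,dy=\int_{\mathcal C(\lambda_k a)}|\omega(x,0)|^{q_1}\,dx.$$
As $\lambda_k\to0$ the cylinder $\mathcal C(\lambda_k a)$ shrinks to the origin while the total integral remains bounded by $C_0$, so this quantity tends to $0$ by absolute continuity of the Lebesgue integral.

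To transfer this to the limit I would convert the velocity convergence \eqref{weakconti} into a statement about vorticity by integration by parts: for any vector field $\varphi\in C^\infty_0(\mathcal C(a))$, the distributional identity gives $\int_{\mathcal C(a)}\omega^{\lambda_k,\alpha}(y,s)\cdot\varphi\,dy=\int_{\mathcal C(a)}v^{\lambda_k,\alpha}(y,s)\cdot(\nabla\times\varphi)\,dy$, so applying \eqref{weakconti} with the admissible test field $\nabla\times\varphi$ yields convergence in $C([-a^2,0])$ to $\int_{\mathcal C(a)}u(y,s)\cdot(\nabla\times\varphi)\,dy=\int_{\mathcal C(a)}\omega(u)(y,s)\cdot\varphi\,dy$. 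Evaluating at $s=0$ and combining with the previous step via H\"older (with $q_1'=q_1/(q_1-1)$), namely $\big|\int_{\mathcal C(a)}\omega^{\lambda_k,\alpha}(\cdot,0)\cdot\varphi\big|\le\|\varphi\|_{L_{q_1'}}\big(\int_{\mathcal C(a)}|\omega^{\lambda_k,\alpha}(\cdot,0)|^{q_1}\big)^{1/q_1}\to0$, gives $\int_{\mathcal C(a)}\omega(u)(y,0)\cdot\varphi\,dy=0$ for all $\varphi$ and all $a$, i.e. $\omega(u)(\cdot,0)=0$. In particular $\omega_\vartheta(u)(\cdot,0)=0$, so $g(0)=0<\infty$, and Proposition \ref{minorpropo} (whose hypothesis \eqref{axirestriction on m} is assumed) forces $g(t)\equiv0$; thus $\omega_\vartheta(u)\equiv0$ and $u$ is irrotational. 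As recalled in Section~3 of \cite{Seregin2023}, an irrotational such $u$ must be trivial, contradicting \eqref{nontrivial}. Since the construction of $u$ and the applicability of Lemma \ref{decaycond} require precisely \eqref{m-order}, \eqref{decaycond1} and \eqref{s_1}, this contradiction proves that a Type~II singularity of the form \eqref{growthcond} can occur only if one of those three conditions fails.

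The step I expect to be the main obstacle is the preliminary one, the vorticity analog of \eqref{everywhere}: establishing weak continuity in time of $\omega(\cdot,t)$ in $L_{q_1}$ all the way up to the closed endpoint $t=0$. The vorticity sits one derivative above the energy class and the admissible exponent $q_1\in[6/5,3/2[$ is borderline, so this is exactly where the second-derivative bound \eqref{secondderiveNsl} under the constraint \eqref{for second deriv} is genuinely used; the remaining scaling and integration-by-parts steps are then routine consequences of the apparatus already set up for Proposition \ref{back1}.
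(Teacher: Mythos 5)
Your proposal is correct and follows essentially the same route the paper intends for this corollary (which is stated without proof precisely because it mirrors Proposition \ref{back1}): you rerun that argument with $\omega=\nabla\times v$ in place of $v$, using that $(\alpha+1)q_1=3$ makes $\int|\omega|^{q_1}dx$ invariant under the Euler scaling, and you transfer the weak continuity \eqref{weakconti} to the vorticity by testing with $\nabla\times\varphi$. Your endgame --- $\omega(u)(\cdot,0)=0$, hence $g(0)=0$, then Proposition \ref{minorpropo} forces $u$ to be irrotational and therefore trivial, contradicting \eqref{nontrivial} --- is exactly the paper's.
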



\setcounter{equation}{0}
\section{Self-Similarity and Axial Symmetry}

Here and in what follows, we assume that a potential singular point $z=0$ obeys the additional restriction:
\begin{equation}
	\label{nongrowthcondcyl}
	|v(x,t)|\leq c_2\Big(\frac 1{|x'|^{\alpha_0}(-t)^{(1-\alpha_0)/2}}\Big)^\gamma,\quad \gamma=\frac \alpha{\alpha_0+\frac {1+\alpha}2(1-\alpha_0)},\end{equation}
	for all $x\in \mathcal C$ and for all $-1<t<0$, where $\alpha=2-m$ and a number $0\leq \alpha_0\leq 1$ is given. Since,  inequality
	 \eqref{nongrowthcondcyl} is invariant with respect to the above mentioned Euler scaling, see the beginning of the previous section, the blowup function $u$ must satisfy  a similar estimate:
\begin{equation}
	\label{nongrowthcondcylancient}
	|u(y,s)|\leq c_2\Big(\frac 1{|y'|^{\alpha_0}(-s)^{(1-\alpha_0)/2}}\Big)^\gamma
\end{equation} for all $y=(y',y_3)$ and all $s<-\infty$.

So, let us look for a solution to \eqref{velocity} in a self-similar form
\begin{equation}
	\label{self-similar2}
u(x,t)=\frac 1{(-t)^{\frac \alpha{\alpha+1}}}U(y),\quad p(x,t)=\frac 1{(-t)^{\frac {2\alpha}{\alpha+1}}}P(y),
\end{equation} where
$$y=\frac x{(-t)^{\frac 1{\alpha+1}}}.$$ 
Then, the profile function $U=(U_r,0,U_3)$ obeys the following system
$$\frac \alpha{\alpha+1}U_r+\frac 1{\alpha+1}\Big(rU_{r,r}+y_3U_{r,3}\Big)+U_rU_{r,r}+U_3U_{r,3}+P_{,r}=0,
$$
$$\frac \alpha{\alpha+1}U_3+\frac 1{\alpha+1}\Big(rU_{3,r}+y_3U_{3,3}\Big)+U_rU_{3,r}+U_3U_{3,3}+P_{,3}=0,
$$
$$\frac 1r(rUr)_{,r}+U_{3,3}=0.
$$
With the same change of variables,  the previous system can be transformed  
to the another one:
\begin{equation}
	\label{self-sim-vorticity}F+\Big(\frac
{r}{\alpha+1}+\frac 1r\Psi_{,3}\Big)F_{,r}+\Big(\frac {y_3}{\alpha+1}-\frac 1r\Psi_{,r}\Big)F_{,3}=0,
\end{equation}
$$\Delta \Psi -\frac 1r\Psi_{,r}=r^2F,$$
where 
$$U_r=\frac 1r\Psi_{,3},\qquad U_3=-\frac{1}{r}\Psi_{,r}$$
and 
$$rF=U_{r,3}-U_{3,r}.$$

Let us list all restrictions on $U$:
$$\sup\limits_{b>1}\Bigg(\frac 1{b^{m_1}}\int\limits_{\mathcal C(b)}|U(x)|^2dx+\frac 1{b^m}\int\limits_{\mathcal C(b)}|\nabla U(x)|^2dx+$$
\begin{equation}
\label{profile bounds I}
+\frac 1{b^{\gamma_*l_1}}\Big(\int\limits_{\mathcal C(b)}|\nabla^2U(x)|^{s_1}dx\Big)^\frac {l_1}{s_1}\Bigg)\leq c_0<\infty\end{equation}
where $s_1>1$ must satisfy \eqref{for second deriv} with some $l_1>1$, see arguments in the paper \cite{Seregin2023}, and 
\begin{equation}
\label{profile bounds II}
 |U(y)||y'|^{\gamma\alpha_0}\leq c_2
\end{equation}
for all $y\in \mathbb R^3$.

  
Let $\varphi$ is a standard non-negative cut-off function such that 
$\varphi=1$ in $B(R)$ and $\varphi=0$ outside $B(2R)$, and $|\nabla \varphi|<c/R$. Let $\Phi:\mathbb R\to\mathbb R_+$, then we can deduce from \eqref{self-sim-vorticity} that
$$\int\limits_{\mathbb R^3}F\Phi'(F)\varphi dx+\int\limits_{\mathbb R^3}\Big(\frac r{\alpha+1}+\frac 1r\Psi_{,3}\Big)\Phi(F)_{,r}\varphi dx+$$$$+\int\limits_{\mathbb R^3}\Big(\frac {y_3}{\alpha+1}-\frac 1r\Psi_{,r}\Big)\Phi(F)_{,3}\varphi dx=0.$$
Integration by parts gives:
$$\int\limits_{\mathbb R^3}F\Phi'(F)\varphi dx-\frac 3{\alpha+1}\int\limits_{\mathbb R^3}\Phi(F)\varphi dx-\frac 1{\alpha+1}\int\limits_{\mathbb R^3}\Phi(F)(r\varphi_{,r}+y_3\varphi_{,3})dx-$$
$$-\int\limits_{\mathbb R^3}\Phi(F)
U\cdot\nabla\varphi dx=0.
$$
\begin{pro}
	\label{self-similar result}
	Assume that $\alpha_0=0$ and condition \eqref{secondderiveNsl} holds with number $s_1$ satisfying the inequality:
	\begin{equation}
	\label{choice of s1}
	 \frac 3{3-m}=\frac 3{\alpha+1}<s_1 <\frac 32.
\end{equation}
Then $U\equiv0$.
\end{pro}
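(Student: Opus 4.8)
The plan is to test the weak form of the self-similar vorticity equation \eqref{self-sim-vorticity} — in the integrated-by-parts form displayed just above the statement — with $\Phi(q)=|q|^{s_1}$, and to exploit the global boundedness of the profile that $\alpha_0=0$ provides. Indeed, for $\alpha_0=0$ the bound \eqref{profile bounds II} reads $|U(y)|\le c_2$ for all $y$, so $U$ is bounded. Since $\Phi(q)=|q|^{s_1}$ satisfies $q\Phi'(q)=s_1\Phi(q)$, and since $r\varphi_{,r}+y_3\varphi_{,3}=y\cdot\nabla\varphi$, the displayed identity becomes
\[
\Big(s_1-\tfrac{3}{\alpha+1}\Big)\int_{\mathbb R^3}|F|^{s_1}\varphi\,dx
=\tfrac{1}{\alpha+1}\int_{\mathbb R^3}|F|^{s_1}\,(y\cdot\nabla\varphi)\,dx
+\int_{\mathbb R^3}|F|^{s_1}\,U\cdot\nabla\varphi\,dx .
\]
The lower bound $s_1>3/(\alpha+1)$ in \eqref{choice of s1} is used precisely to make the coefficient $a_0:=s_1-3/(\alpha+1)$ strictly positive, while the upper bound $s_1<3/2$ forces $l_1>1$ in \eqref{for second deriv}, so that \eqref{profile bounds I} applies and $h(R):=\int_{B(R)}|F|^{s_1}\,dx$ is finite for each $R$ via $|F|\le|\nabla^2U|$.

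Next I would localise on balls. Taking $\varphi$ to be a radial cut-off approximating $\mathbf 1_{B(R)}$ and passing to the surface form valid for almost every $R$ (coarea formula, $h'(R)=\int_{\partial B(R)}|F|^{s_1}\,dS$), the identity turns into the differential relation
\[
\tfrac{R}{\alpha+1}\,h'(R)=-a_0\,h(R)-\int_{\partial B(R)}|F|^{s_1}\,(U\cdot n)\,dS ,
\qquad n=\tfrac{y}{|y|}.
\]
Here the boundedness of $U$ is decisive: since $|U\cdot n|\le|U|\le c_2$ and $|F|^{s_1}\ge 0$, the surface flux is dominated by $c_2\,h'(R)$, so that $\big(\tfrac{R}{\alpha+1}-c_2\big)h'(R)\le -a_0\,h(R)$. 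For $R>(\alpha+1)c_2$ the prefactor is positive, whereas $h$ is nonnegative and nondecreasing ($h'\ge 0$); the inequality then forces $h'(R)=0$ and $h(R)=0$ for all large $R$, i.e. $\int_{\mathbb R^3}|F|^{s_1}\,dx=0$ and $F\equiv 0$. Note that this settles the decay at infinity automatically, without a separate proof that $F\in L_{s_1}(\mathbb R^3)$.

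It then remains to deduce triviality from $F\equiv 0$. Since $\omega_\vartheta(U)=rF=0$, the field $U$ is irrotational; being also divergence-free it is harmonic, hence constant by Liouville's theorem because it is bounded. The growth bound $\int_{\mathcal C(b)}|U|^2\,dx\le c_0\,b^{m_1}$ with $m_1=2m-1<3$ is incompatible with a nonzero constant, so $U\equiv 0$; alternatively one invokes the irrotational-implies-trivial argument of Section~3 of \cite{Seregin2023}.

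The main obstacle I expect is analytic rather than structural: justifying the identity for the non-Lipschitz nonlinearity $\Phi(q)=|q|^{s_1}$ and the passage from smooth cut-offs to the sharp surface identity at almost every $R$. This calls for the truncation scheme of Lemma \ref{decaycond} (replacing $\Phi$ by a Lipschitz $\Phi_\delta$ and letting $\delta\to 0$), the absolute continuity of $h$ together with $h'(R)=\int_{\partial B(R)}|F|^{s_1}\,dS$, and the pointwise domination $|F|\le|\nabla^2U|$ up to the axis so that all integrals over $B(R)$ and $\partial B(R)$ are finite. Once these points are secured, the positivity of $a_0$ combined with the mere boundedness of $U$ closes the argument with no smallness hypothesis.
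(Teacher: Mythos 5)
Your proposal is correct, and it starts exactly where the paper does: the same weak identity obtained by testing \eqref{self-sim-vorticity}, the same choice $\Phi(q)=|q|^{s_1}$ (the factor $1/s_1$ is immaterial), and the same two inputs --- positivity of $a_0=s_1-3/(\alpha+1)$ from the lower bound in \eqref{choice of s1}, and boundedness $|U|\le c_2$ from \eqref{profile bounds II} with $\alpha_0=0$. Where you genuinely diverge is the endgame. The paper keeps smooth cut-offs chosen so that $r\varphi_{,r}+y_3\varphi_{,3}\le 0$, discards that term by sign, and obtains the doubling inequality $I(R)\le \frac cR I(2R)$; it then feeds in the quantitative growth bound $I(2R)\le c_0'(2R)^{\gamma_*s_1}$ coming from \eqref{profile bounds I} and iterates, lowering the exponent by $1$ at each step until it is negative, whence $I\equiv0$ by monotonicity (the paper's ``$\to 0$ as $R\to0$'' is evidently a typo for $R\to\infty$). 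You instead localize sharply on balls, retain the dilation term with its exact value $-\frac{R}{\alpha+1}h'(R)$, absorb the flux term into $c_2h'(R)$, and conclude by pure positivity: for $R>(\alpha+1)c_2$ the inequality $\big(\frac{R}{\alpha+1}-c_2\big)h'(R)+a_0h(R)\le0$ with both terms nonnegative forces $h\equiv0$. Your route buys something real: you never use the growth rate $\gamma_*s_1$ of \eqref{profile bounds I}, only local finiteness of $\int|F|^{s_1}dx$; there is no iteration; and vanishing is obtained outright beyond an explicit finite radius. The price is the extra measure-theoretic work (absolute continuity of $h$, coarea formula, a.e.-$R$ Lebesgue points) that the paper's smooth-cut-off scheme avoids. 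The remaining technical caveats --- the chain rule for the non-Lipschitz $\Phi$ via a truncation $\Phi_\delta$, and justifying the identity across the axis $|y'|=0$ --- are shared with (and largely glossed over by) the paper itself, and you flag them correctly. Finally, you make explicit the step the paper leaves implicit, namely $F\equiv0\Rightarrow U\equiv0$: irrotational plus solenoidal plus bounded gives a constant field by Liouville, which the growth bound $\int_{\mathcal C(b)}|U|^2dx\le c_0b^{m_1}$ with $m_1<3$ forces to vanish; citing Section 3 of \cite{Seregin2023} is an equally acceptable alternative.
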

\begin{proof}
	We let $\Phi(F)=\frac 1{s_1}|F|^{s_1}$. 
So, we have
$$\Big(s_1-\frac 3{\alpha+1}\Big)\int\limits_{\mathbb R^3}\Phi(F)\varphi dx=$$
$$=\frac 1{\alpha+1}\int\limits_{\mathbb R^3}\Phi(F)(r\varphi_{,r}+y_3\varphi_{,3})dx
+\int\limits_{\mathbb R^3}\Phi(F)
U\cdot\nabla\varphi dx.$$
 
A cut-off function $\varphi$ can be taken so that $r\varphi_{,r}+y_3\varphi_{,3}\leq 0$ and thus, by \eqref{profile bounds II} with $\alpha_0=0$, 

$$I(R):=\int\limits_{\mathcal C(R)}|F|^{s_1} dx\leq 
c(\alpha,s_1)\frac 1R\int\limits_{\mathcal C(2R)\setminus \mathcal C(R)} |F|^{s_1}|U|dx\leq$$
$$\leq c(\alpha,s_1,c_2)\frac 1RI(2R)$$
for all $R>0.$ 
  By bounds $|F|\leq |\nabla \omega(U)|\leq |\nabla^2U|$ and by \eqref{profile bounds I}, we have  the inequality 
$$I(2R)\leq c_0(2R)^{\gamma_*(s_1,m)s_1},$$
which implies 
$$I(R)\leq c_0'R^{\gamma_*s_1-1}
$$ 
for all $R>0$.  Repeating the above arguments, 
we find $I(R)\leq c''_0R^{\gamma_*s_1-2}\to 0$ as $R\to0$.
\end{proof}



\section{Discrete Self-Similarity and Axial Symmetry}

Here, we are looking for the function  $u$ of  Proposition \ref{ancientsolution} in the following form
$$u(x,t)=\frac 1{(-t)^{\frac \alpha{\alpha+1}}}U(y,\tau),\quad p(x,t)=\frac 1{(-t)^{\frac {2\alpha}{\alpha+1}}}P(y,\tau),
$$
where 
$$y=\frac x{(-t)^{\frac 1{\alpha+1}}}, \quad \tau =-\ln(-t).
$$
 Then the Euler equations for profile $U$ take the form: 
$$\partial_\tau U_r+\frac \alpha{\alpha+1}U_r+\frac 1{\alpha+1}\Big(rU_{r,r}+y_3U_{r,3}\Big)+
U_rU_{r,r}+U_3U_{r,3}+P_{,r}=0,
$$
$$\partial_\tau U_3+\frac \alpha{\alpha+1}U_3+\frac 1{\alpha+1}\Big(rU_{3,r}+y_3U_{3,3}\Big)+U_rU_{3,r}+U_3U_{3,3}+P_{,3}=0,
$$
$$\frac 1r(rUr)_{,r}+U_{3,3}=0.
$$
One should also add the condition of the periodicity in time for $U$:
$$(U_r,0,U_3)(r,x_3,\tau)=(U_r,0,U_3)(y,\tau+S_0).$$
After introducing new unknown functions, the above system can be transformed as follows:
\begin{equation}\label{discrete self-sim-vorticity}
\partial_\tau	F+\Big(\frac
{r}{\alpha+1}+\frac 1r\Psi_{,3}\Big)F_{,r}+\Big(\frac {y_3}{\alpha+1}-\frac 1r\Psi_{,r}\Big)F_{,3}=0,
\end{equation}
$$\Delta \Psi -\frac 1r\Psi_{,r}=r^2F.$$
The corresponding connections between two systems are given in the following way:
$$U_r=\frac 1r\Psi_{,3},\qquad U_3=-\frac{1}{r}\Psi_{,r}$$
and 
$$rF=U_{r,3}-U_{3,r}.$$

\begin{pro}\label{dicrete-simple}
	 Assume that all conditions of Proposition \ref{minorpropo} 
	  hold. Then $U\equiv0$.
\end{pro}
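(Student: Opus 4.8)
The plan is to push the time-conservation of Proposition~\ref{minorpropo} down to the profile $F$ and then to play the exponential factor coming from the self-similar change of variables against the $\tau$-periodicity built into this section.

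First I would rewrite $g(t)$ of Proposition~\ref{minorpropo} in profile variables. Setting $\mu=(-t)^{1/(\alpha+1)}$, so that $y=x/\mu$, $\tau=-\ln(-t)$ and $\mu=e^{-\tau/(\alpha+1)}$, the ansatz $u(x,t)=\mu^{-\alpha}U(y,\tau)$ gives $\omega_\vartheta(u)(x,t)=\mu^{-\alpha-1}\omega_\vartheta(U)(y,\tau)$; since $|x'|=\mu|y'|$ and $\omega_\vartheta(U)=|y'|F$, this yields $f(x,t)=\omega_\vartheta(u)(x,t)/|x'|=\mu^{-\alpha-2}F(y,\tau)$. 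Inserting this together with $dx=\mu^3\,dy$ into the definition of $g$ leads to
$$g(t)=\mu^{\beta}\,G(\tau),\qquad G(\tau):=\frac{2}{l_1}\int\limits_{\mathbb R^3}|F(y,\tau)|^{\frac{l_1}{2}}\,dy,\qquad \beta:=3-\frac{(\alpha+2)l_1}{2}.$$

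Next I would combine two facts. Proposition~\ref{minorpropo}, all of whose hypotheses are assumed here, gives $g(t)\equiv g(t_0)$, so $\mu^{\beta}=e^{-\beta\tau/(\alpha+1)}$ forces $G(\tau)=g(t_0)\,e^{\beta\tau/(\alpha+1)}$ for every $\tau\in\mathbb R$. On the other hand, the discrete self-similarity of this section means $U(\cdot,\tau)=U(\cdot,\tau+S_0)$, hence $F$, and therefore $G$, is $S_0$-periodic. Imposing $G(\tau+S_0)=G(\tau)$ on the explicit formula yields $g(t_0)\big(e^{\beta S_0/(\alpha+1)}-1\big)=0$.

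The only genuine point to verify, and the one I expect to be the main (if modest) obstacle, is that $\beta\neq0$, so that $e^{\beta S_0/(\alpha+1)}\neq1$ and $g(t_0)=0$ is forced. Here I would use condition~\eqref{s_1}: from $l_1\le s_1$ and the relation $3/s_1+2/l_1=4$ one gets $4=3/s_1+2/l_1\le 5/l_1$, i.e. $l_1\le5/4$; since $\alpha+2=4-m$ with $m<1$, it follows that $\beta=3-(4-m)l_1/2\ge 3-(4-m)\tfrac{5}{8}=(4+5m)/8>0$. With $\beta>0$ and $S_0>0$ the factor $e^{\beta S_0/(\alpha+1)}-1$ is strictly positive, so $g(t_0)=0$.

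Finally, $g(t_0)=0$ together with $g\equiv g(t_0)$ gives $\omega_\vartheta(u(\cdot,t))\equiv0$ for all $t\le0$, equivalently $F\equiv0$. Thus $u$ is an irrotational, axisymmetric, zero-swirl ancient solution obeying the bounds~\eqref{basicestimates}, and exactly as in the proof of Proposition~\ref{back1} such a field must be trivial (see Section~3 of \cite{Seregin2023}); hence $u\equiv0$ and therefore $U\equiv0$.
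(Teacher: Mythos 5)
Your proof is correct and follows essentially the same route as the paper: both express the conserved quantity $g(t)$ of Proposition \ref{minorpropo} in profile variables as an $S_0$-periodic function $G(\tau)$ multiplied by a power of $(-t)$ with strictly positive exponent (the paper's $\frac{6-l_1(\alpha+2)}{2(1+\alpha)}$ equals your $\beta/(\alpha+1)$), and then conclude $g(t_0)=0$, hence $u$ is irrotational and therefore trivial by \cite{Seregin2023}. The only cosmetic difference is that the paper forces $g(t_0)=0$ by letting $t_{(k)}=-e^{-kS_0}\to 0^-$ along the periodic sequence, while you solve the functional equation $G(\tau+S_0)=G(\tau)$ against the explicit exponential — the same mechanism.
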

\begin{proof}
	By Proposition \ref{minorpropo} and 
	 according to the definition of discrete self-similar solution, we have
$$\int\limits_{\mathbb R^3}\frac{ |{\omega_\vartheta}(u(x,t_0))|^\frac {l_1}2}{|x'|^\frac {l_1}2}dx=\int\limits_{\mathbb R^3}\frac{ |{\omega_\vartheta}(u(x,t))|^\frac {l_1}2}{|x'|^\frac {l_1}2}dx=$$$$=\int\limits_{\mathbb R^3}\frac{ |{\omega_\vartheta}(U(y,\tau))|^\frac {l_1}2}{|y'|^\frac {l_1}2}dy (-t)^\frac {6-l_1 (\alpha+2)}{2(1+\alpha)}.$$
We let $t=t_{(k)}=-\exp\{-\tau_{(k)}\}$, where $\tau_{(k)}=kS_0$ and $k=1,2,..$. By periodicity, 
$U(y, \tau_{(k)})=U(y, \tau_{(0)})$ and therefore
$$\int\limits_{\mathbb R^3}\frac{ |{\omega_\vartheta}(u(x,t_0))|^\frac {l_1}2}{|x'|^\frac {l_1}2}dx=\int\limits_{\mathbb R^3}\frac{ |{\omega_\vartheta}(U(y,\tau_{(0)})|^\frac {l_1}2}{|y'|^\frac {l_1}2}dy (-t_{(k)})^\frac {6-l_1(\alpha+2)}{2(1+\alpha)}\to0.$$
as $k\to\infty$ (since $2>\alpha>1$ and $1<l_1\leq s_1<3/2$). That means $rot\, u=0$, i.e., the flow $u$ is irrotational and thus under our standing assumptions $u$ must be identically equal to zero, see \cite{Seregin2023}. So, $u$ cannot be discrete self-similar under restrictions \eqref{axirestriction on m} and \eqref{assumption}.
\end{proof}


\end{document}